\numberwithin{equation}{section}
\newcommand{\calH}{\mathcal{H}}
\newcommand{\mC}{\mathbb{C}}
\newcommand{\mD}{\mathbb{D}}
\newcommand{\mN}{\mathbb{N}}
\newcommand{\mT}{\mathbb{T}}
\newcommand{\sH}{{{\mathscr{H}}^{\infty}}}
\newtheorem{theorem}{Theorem}[section]
\newtheorem{corollary}[theorem]{Corollary}
\newtheorem{proposition}[theorem]{Proposition}
\theoremstyle{definition}
\theoremstyle{definition}
\newtheorem{definition}[theorem]{Definition}
\theoremstyle{definition}
\begin{document}

\keywords{coherent ring, Hardy algebra, Dirichlet series, Bass stable rank, topological stable rank, 
Krull dimension, $K$-theory}

\subjclass{Primary 11M41; Secondary 30H05, 13E99}

\title[On the algebra of bounded Dirichlet series]{Some curiosities of \\
the algebra of bounded Dirichlet series}

\author{Raymond Mortini}
\address{Universit\'{e} de Lorraine\\ 
D\'{e}partement de Math\'{e}matiques et Institut \'Elie Cartan de Lorraine,  UMR 7502\\
Ile du Saulcy\\ F-57045 Metz, France}
\email{raymond.mortini@univ-lorraine.fr}
 
\author{Amol Sasane}
\address{Department of Mathematics, 
London School of Economics, 
Houghton Street, London WC2A 2AE, U.K.}
\email{sasane@lse.ac.uk}

\begin{abstract}
  It is shown that the algebra $\sH$ of bounded Dirichlet series is
  not a coherent ring, and has infinite Bass stable rank.  As
  corollaries of the latter result, it is derived that $\sH$ has
  infinite topological stable rank and infinite Krull dimension.
\end{abstract}

\maketitle

\section{Introduction}

The aim of this short note is to make explicit two observations about
algebraic properties of the ring $\sH$ of bounded Dirichlet series.
In particular we will show that
\begin{enumerate}
\item $\sH$ is not a coherent ring. (This is essentially an immediate
  consequence of Eric Amar's proof of the noncoherence of the Hardy
  algebra $H^\infty(\mD^n)$ of the polydisk $\mD^n$ for $n\geq 3$ \cite{Ama}.)
\item $\sH$ has infinite Bass stable rank. (This is a straightforward
  adaptation of the first author's proof of the fact that the
  stable rank of the infinite polydisk algebra is infinite \cite{Mor}). As corollaries, we
  obtain that $\sH$ has infinite topological stable rank, and infinite
  Krull dimension.
\end{enumerate}
Before giving the relevant definitions, we briefly mention that $\sH$ is 
a closed Banach subalgebra of the classical Hardy algebra 
$H^{\infty}(\mC_{\scriptscriptstyle >0})$ consisting of all 
bounded and holomorphic functions in the open right half plane 
$$
\mC_{\scriptscriptstyle >0}:= \{s\in \mC:\textrm{Re}(s)>0\},
$$
and it is striking to compare our findings with the corresponding results 
for $H^{\infty}(\mC_{\scriptscriptstyle >0})$:

\begin{center}
\begin{tabular}{|l||l|c|}\hline
  &  $H^\infty(\mC_{\scriptscriptstyle >0})$ & $\sH$\\ \hline\hline
 Coherent? & Yes (See \cite{McVRub}) & No \\ \hline 
 Bass stable rank & $1$ \phantom{aa}(See \cite{Tre}) & $\infty$ \\ \hline
 Topological stable rank & $2$\phantom{aa} (See \cite{Sua}) & $\infty$ \\ \hline 
 Krull dimension & $\infty$\phantom{a} (See \cite{von}) & $\infty$ \\ \hline
\end{tabular}
\end{center}

\medskip 

\noindent Nevertheless the above results for $\sH$ lend support to Harald Bohr's 
idea of interpreting Dirichlet series as functions of infinitely many 
complex variables, a key theme used in the proofs of the main results 
in this note.

We recall the pertinent definitions below.

\subsection{The algebra $\sH$ of bounded Dirichlet series} 

$\sH$ denotes the set of Dirichlet series
\begin{equation}
\label{eq_DS_1}
f(s)=\sum_{n=1}^\infty \frac{a_n}{n^s},
\end{equation}
where $(a_n)_{n\in \mN}$ is a sequence of complex numbers, such that
$f$ is holomorphic and bounded in $\mC_{\scriptscriptstyle >0}$. 
Equipped with pointwise operations and
the supremum norm,
$$
\|f\|_\infty:=\sup_{s\in \mC_{\scriptscriptstyle >0}} |f(s)|, \quad f\in \sH,
$$
$\sH$ is a unital commutative Banach algebra.  In
\cite[Theorem~3.1]{HedLinSei}, it was shown that the Banach algebra
$\sH$ is precisely the multiplier space of the Hilbert space $\calH$
of Dirichlet series 
$$
\displaystyle
f(s)=\sum_{n=1}^\infty\frac{a_n}{n^s} 
$$ 
for which
$$
\displaystyle \|f\|_{\calH}^2:=\sum_{n=1}^\infty |a_n|^2<\infty.
$$
The importance of the Hilbert space $\calH$ stems from the fact that
its kernel function $K_{\calH}(z,w)$ is related to the Riemann zeta
function $\zeta$: 
$$
K_{\calH}(z,w)=\zeta(z+\overline{w}).
$$
For $m\in \mN$, let $\mathscr{H}^\infty_m$ be the closed subalgebra of
$\sH$ consisting of Dirichlet series of the form \eqref{eq_DS_1}
involving only integers $n$ generated by the first $m$ primes
$2,3,\cdots, p_m$.
 
\subsection{$\sH= H^\infty(\mD^\infty)$}

In \cite[Lemma~2.3 and the proof of Theorem~3.1]{HedLinSei}, it was
established that $\sH$ is isometrically (Banach algebra) isomorphic to
$H^\infty(\mD^\infty )$, a certain algebra of functions analytic in
the infinite dimensional polydisk, defined below. As this plays a
central role in what follows, we give an outline of this based on
\cite{HedLinSei}, \cite{Sei} and \cite{MauQue}.

A seminal observation made by H. Bohr \cite{Boh}, is that if we
put
$$
z_1= \frac{1}{2^s}, \;\; 
z_2= \frac{1}{3^s}, \;\; 
z_3= \frac{1}{5^s}, \cdots, 
z_n= \frac{1}{p_n^s}, \cdots, 
$$
where $p_n$ denotes the $n$th prime, then, in view of the Fundamental
Theorem of Arithmetic, formally a Dirichlet series in
$\mathscr{H}^\infty_n$ or $\sH$ can be considered as a power series of
infinitely many variables. Indeed, each $n$ has a unique expansion
$$
n=p_1^{\alpha_1(n)} \cdots p_{r(n)}^{\alpha_{r(n)}(n)},
$$
with nonnegative $\alpha_j(n)$s, and so, from \eqref{eq_DS_1}, we obtain the
formal power series
\begin{equation}
 \label{eq_formal_cap_F}
F(\mathbf{z})=\sum_{n=1}^\infty a_n z_1^{\alpha_1(n)}\cdots z_{r(n)}^{\alpha_{r(n)}(n)},
\end{equation}
where $\mathbf{z}=(z_1,\cdots, z_m)$ or
$\mathbf{z}=(z_1,z_2,z_3,\cdots)$ depending on whether $f$ is a
function in $ \mathscr{H}^\infty_m$ or in $\sH$.  Let us recall  Kronecker's
Theorem on diophantine approximation \cite[Chapter~XXIII]{HarWri}:

\begin{proposition}
For each $m\in \mN$, the map 
$$
t\mapsto (2^{-it}, 3^{-it}, \cdots, p_m^{-it}):(0,\infty)\rightarrow \mT^m
$$ 
has dense range in $\mT^m$, where $\mT:=\{z\in \mC:|z|=1\}$. 
\end{proposition}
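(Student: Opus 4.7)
The plan is to reduce the statement to the standard Kronecker--Weyl density theorem for continuous one-parameter subgroups of the torus $\mT^m$. Writing $p_j^{-it} = e^{-it \log p_j}$, the map in question becomes
$$
t \mapsto (e^{-it \alpha_1}, \ldots, e^{-it \alpha_m}), \qquad \alpha_j := \log p_j,
$$
whose image visibly lies in $\mT^m$, so everything reduces to showing that this one-parameter subgroup is dense.

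First, I would establish that $\alpha_1, \ldots, \alpha_m$ are linearly independent over $\mQ$. Suppose $\sum_{j=1}^m q_j \log p_j = 0$ with $q_j \in \mQ$. Clearing denominators yields integers $n_j$, not all zero, with $\sum_j n_j \log p_j = 0$, i.e.\ $\prod_j p_j^{n_j} = 1$; separating positive and negative exponents produces two positive integers with distinct prime factorizations, contradicting the Fundamental Theorem of Arithmetic.

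Granted this independence, I would invoke (or quickly reprove) Kronecker--Weyl: the closure $H$ of the two-sided orbit $\{(e^{-it\alpha_1},\ldots,e^{-it\alpha_m}) : t \in \mR\}$ is a closed subgroup of $\mT^m$, and by Pontryagin duality $H = \mT^m$ unless some nonzero character $\mathbf{n} \in \mZ^m$ kills the orbit --- that is, $\sum_j n_j \alpha_j = 0$, which is forbidden by the previous step. (Alternatively, Weyl's equidistribution criterion does the job.)

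The last subtlety, and the one I expect to be the main delicate point, is passing from $t \in \mR$ to $t \in (0,\infty)$. By compactness of $\mT^m$, fix an accumulation point $p$ of the forward orbit along some sequence $t_n \to +\infty$. For any $s \in \mR$ the numbers $t_n + s$ are eventually positive, and
$$
(e^{-i(t_n+s)\alpha_1}, \ldots, e^{-i(t_n+s)\alpha_m}) \; \longrightarrow \; p \cdot (e^{-is\alpha_1}, \ldots, e^{-is\alpha_m})
$$
componentwise, so the translate of the full two-sided orbit by $p$ is contained in the closure of the forward orbit. Since the two-sided orbit is already dense in $\mT^m$, so is its translate by $p$, whence the forward orbit is dense as required.
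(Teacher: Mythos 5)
Your argument is correct. Note, however, that the paper does not prove this proposition at all: it simply recalls it as Kronecker's theorem on diophantine approximation and cites Hardy--Wright, Chapter~XXIII. So what you have written is a self-contained proof of the cited classical fact, and it contains exactly the two ingredients that the citation is meant to supply: (i) the $\mQ$-linear independence of $\log p_1,\dots,\log p_m$, which you correctly deduce from unique factorization, and (ii) the Kronecker--Weyl density theorem for one-parameter subgroups of $\mT^m$, which you prove via Pontryagin duality (a proper closed subgroup is annihilated by a nonzero character $\mathbf{n}\in\mZ^m$, forcing $\sum_j n_j\log p_j=0$). Your final step, upgrading density of the two-sided orbit $t\in\mR$ to density of the forward orbit $t\in(0,\infty)$ by translating through an accumulation point of the orbit along $t_n\to+\infty$, is a genuine point that the bald statement ``Kronecker's theorem'' glosses over, and your treatment of it is correct; one could equally note that the closure of the forward orbit is a closed sub-semigroup of the compact group $\mT^m$, hence a subgroup, or invoke Weyl equidistribution on $[0,T]$ as you mention. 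In short: the proposal is sound and slightly more informative than the paper, which delegates everything to the reference; what the paper's route buys is brevity, while yours makes the note self-contained at the cost of importing the duality (or equidistribution) machinery.
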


\noindent Using the above and the Maximum Principle, it can be shown that for $f\in
\mathscr{H}^\infty_m$,
\begin{equation}
 \label{eq_norm_eq}
 \|f\|_\infty =\|F\|_\infty,
\end{equation}
where the norm on the right hand side is the $H^\infty(\mD^m)$
norm. Here $H^\infty(\mD^m)$ denotes the usual Hardy algebra of
bounded holomorphic functions on the polydisk $\mD^m$, endowed with
the supremum norm:
$$
\|F\|_\infty :=\sup_{\mathbf{z}\in \mD^m}|F(\mathbf{z})|,\quad F\in H^\infty(\mD^m).
$$
In \cite{HedLinSei}, it was shown that this result also holds in the infinite dimensional
case.  In order to describe this
result, we introduce some notation. Let $c_0$ be the Banach space of
complex sequences tending to $0$ at infinity, with the induced norm
from $\ell^\infty$, and let $B$ be the open unit ball of that Banach
space.  Thus with $\mN:=\{1,2,3,\cdots\}$ and $\mD:=\{z\in
\mC:|z|<1\}$,
$$
B=c_0\cap \mD^\mN.
$$
For a point  $\mathbf{z}=(z_1,\cdots, z_m,\cdots)\in B$, we set 
 $
\mathbf{z}^{(m)}:=(z_1,\cdots, z_m,0,\cdots),
$ 
that is, $z_k=0$ for $k>m$. Substituting $\mathbf{z}^{(m)}$ in the
argument of $F$ given formally by \eqref{eq_formal_cap_F},  
we obtain a function
$$
(z_1,\cdots,z_m)\mapsto F(\mathbf{z}^{(m)}),
$$
which we call the $m$th-section $F_m$ (after Bohr's terminology
``$m$te abschnitt''). $F$ is said to be in $H^\infty (\mD^\infty)$ if
the $H^\infty$ norm of these functions $F_m$ are uniformly bounded,
and denote the supremum of these norms to be $\|F\|_\infty$. Using
Schwarz's Lemma for the polydisk, it can be seen that for $m<\ell$,
$$
|F(\mathbf{z}^{(m)})-F(\mathbf{z}^{(\ell)})|\leq 
2 \|f\|_\infty \cdot \max\{|z_j|:m<j\leq \ell\},
$$
and so we may define 
$$
F(\mathbf{z})=\lim_{m\rightarrow \infty} F(\mathbf{z}^{(m)}).
$$
It was shown in \cite{HedLinSei} that \eqref{eq_norm_eq} remains true
in the infinite dimensional case, and so we may associate $\sH$ with
$H^\infty(\mD^\infty)$.

\begin{proposition}[\cite{HedLinSei}]
  There exists a Banach algebra isometric isomorphism $\iota :
  \sH\rightarrow H^\infty(\mD^\infty)$.
\end{proposition}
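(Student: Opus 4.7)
The plan is to define $\iota$ via Bohr's substitution $z_j = p_j^{-s}$ and verify that it is an isometric algebra isomorphism in two stages: a finite-dimensional baseline, and the infinite-dimensional limit already foreshadowed in the excerpt. For each $m$, define $\iota_m : \mathscr{H}^\infty_m \to H^\infty(\mD^m)$ by sending $f = \sum_n a_n n^{-s}$ to $F_m(z_1, \ldots, z_m) = \sum_n a_n z_1^{\alpha_1(n)} \cdots z_m^{\alpha_m(n)}$, which is well-defined since only finitely many primes enter. The equality $\|f\|_\infty = \|F_m\|_\infty$ on this finite level is \eqref{eq_norm_eq} and is the heart of Bohr's vision: one inequality comes from evaluating on vertical lines with $z_j = p_j^{-it}$ and invoking Kronecker's theorem, the other from the maximum modulus principle on $\mD^m$.

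To extend to all of $\sH$, given $f \in \sH$ with coefficients $(a_n)$, let $f^{(m)} \in \mathscr{H}^\infty_m$ be its restriction to indices $n$ supported on the first $m$ primes; an averaging argument on the vertical line (convolving $f$ against characters dual to the remaining primes) shows $\|f^{(m)}\|_\infty \le \|f\|_\infty$, so the associated $F_m := \iota_m(f^{(m)}) \in H^\infty(\mD^m)$ satisfy $\|F_m\|_\infty \le \|f\|_\infty$. The Schwarz-lemma bound
\[
|F(\bbz^{(m)}) - F(\bbz^{(\ell)})| \le 2 \|f\|_\infty \cdot \max\{|z_j| : m < j \le \ell\}
\]
displayed in the excerpt then makes $(F_m(\bbz^{(m)}))$ Cauchy for every $\bbz \in B = c_0 \cap \mD^\mN$, so I may set $\iota(f)(\bbz) := \lim_m F_m(\bbz^{(m)})$. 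This defines an element of $H^\infty(\mD^\infty)$ with $\|\iota(f)\|_\infty \le \|f\|_\infty$, and the algebra-homomorphism property is inherited term-by-term from each $\iota_m$.

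To upgrade the contraction to an isometry I pick $s_0 \in \mC_{\scriptscriptstyle >0}$ with $|f(s_0)|$ close to $\|f\|_\infty$ and note that the tuple $(p_1^{-s_0}, p_2^{-s_0}, \ldots)$ lies in $B$; combining Kronecker's theorem on the first $m$ coordinates with the section estimate above produces a $\bbz \in B$ with $|\iota(f)(\bbz)|$ close to $|f(s_0)|$, hence $\|\iota(f)\|_\infty \ge \|f\|_\infty$. For surjectivity, given $G \in H^\infty(\mD^\infty)$ I extract candidate coefficients $a_n$ as Cauchy integrals of the sections $G_m$ over $\mT^m$, verify compatibility in $m$, and use the uniform bound on $\|G_m\|$ to show that $\sum_n a_n n^{-s}$ lies in $\sH$ with $\iota$-image $G$. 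The main obstacle I expect is the absence of a compact closure of $\mD^\infty$ inside $\ell^\infty$, which forces the argument to be carried out on the distinguished subset $B$ via sections rather than by a direct boundary-value argument; it is precisely this restriction that turns the Bohr correspondence into an honest isomorphism rather than a mere embedding.
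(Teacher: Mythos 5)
Your outline follows the same Bohr-lift route that the paper itself sketches (the paper does not prove this proposition at all; it quotes it from \cite{HedLinSei}, Lemma~2.3 and the proof of Theorem~3.1), so the question is whether your sketch closes the steps that \cite{HedLinSei} actually has to work for — and at the two decisive points it does not. Both gaps live in the strip $0<\textrm{Re}(s)\le 1/2$, where the Dirichlet series of $f$ is not known to converge. For the lower bound $\|\iota(f)\|_\infty\ge\|f\|_\infty$ you need, in effect, $\iota(f)\bigl((p_j^{-s_0})_j\bigr)=f(s_0)$, i.e.\ $f^{(m)}(s_0)\to f(s_0)$, and Kronecker's theorem does not give this: Kronecker converts suprema over vertical lines into suprema over finite tori (that is how \eqref{eq_norm_eq} is proved), but it says nothing about recovering the \emph{value} of $f$ at a fixed point from its smooth truncations $f^{(m)}$ when $\textrm{Re}(s_0)$ is small. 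Since the coefficients of $f\in\sH$ are only square-summable, you get $f^{(m)}(s)\to f(s)$ for free only for $\textrm{Re}(s)>1/2$; to push this down to all of $\mC_{\scriptscriptstyle >0}$ you must use that the $f^{(m)}$ are uniformly bounded there (by your section estimate) and run a normal-families/uniqueness-of-analytic-continuation argument (or the Poisson-integral argument of \cite{HedLinSei}). As written, the appeal to ``Kronecker on the first $m$ coordinates'' is not an argument for this step.

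The same missing ingredient reappears, more seriously, in your surjectivity step. From the Cauchy-integral coefficients of the sections you only know $|a_n|\le\|G\|_\infty$, hence convergence of $\sum_n a_n n^{-s}$ for $\textrm{Re}(s)>1$; the claim that ``the uniform bound on $\|G_m\|$'' makes this series an element of $\sH$ with norm at most $\|G\|_\infty$ is precisely the nontrivial content of \cite[Theorem~3.1]{HedLinSei}. One has to pass from the uniformly bounded truncations $f^{(m)}$ (whose Dirichlet series do converge absolutely on all of $\mC_{\scriptscriptstyle >0}$, because they are supported on $p_m$-smooth integers with bounded coefficients) to a bounded holomorphic limit on $\mC_{\scriptscriptstyle >0}$ via Montel, and then identify that limit with the sum of the full Dirichlet series, e.g.\ by Bohr's theorem on bounded analytic continuations of somewhere-convergent Dirichlet series; without this, your candidate $f$ is only defined for $\textrm{Re}(s)>1$ and its membership in $\sH$ is exactly what remains to be shown. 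A smaller remark of the same kind applies to your averaging proof of $\|f^{(m)}\|_\infty\le\|f\|_\infty$: ``convolving against characters dual to the remaining primes'' involves infinitely many variables, so it must be iterated one prime at a time or routed through almost periodicity/the Bohr compactification — standard and fixable, but as stated it is an assertion rather than an argument.
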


\subsection{Coherence}

\begin{definition}
Let $R$ be a unital commutative ring, and for $n\in \mN$, let $R^n=R\times \cdots\times R$ ($n$ times). 

\noindent For $\mathbf{f}=(f_1,\cdots, f_n)\in R^n$, a {\em relation} 
$\mathbf{g}$ on $\mathbf{f}$ is an $n$-tuple $\mathbf{g}=(g_1,\cdots, g_n)$ in $R^n$ such that
$$
g_1 f_1+\cdots + g_n f_n=0.
$$
The set of all relations on $\mathbf{f}$ is denoted by  $\mathbf{f}^\perp$. 

\noindent The ring $R$ is said to be {\em coherent} if for each $n$ and each
$\mathbf{f}\in R^n$, the $R$-module $\mathbf{f}^\perp$ is finitely generated.
\end{definition}

A property which is equivalent to coherence is that
the intersection of any two finitely generated ideals in $R$ is
finitely generated, and the annihilator of any element is finitely
generated \cite{Cha}. We refer the reader to the article \cite{Gla} and the monograph
\cite{Gla2} for the relevance of the property of coherence in
commutative algebra. All Noetherian rings are coherent, but
not all coherent rings are Noetherian. (For example, the polynomial
ring $\mC[x_1, x_2, x_3,\cdots ]$ is not Noetherian because the
sequence of ideals $\langle x_1 \rangle \subset \langle x_1,
x_2\rangle \subset \langle x_1, x_2, x_3\rangle \subset \cdots$ is
ascending and not stationary, but $\mC[x_1, x_2, x_3,\cdots ]$ is
coherent \cite[Corollary~2.3.4]{Gla2}.)

In the context of algebras of holomorphic functions in the unit disk
$\mD$, we mention \cite{McVRub}, where it was shown that the
Hardy algebra $H^\infty(\mD)$ is coherent, while the disk algebra
$A(\mD)$ isn't.  For $n\geq 3$, Amar \cite{Ama} showed that the Hardy
algebra $H^\infty (\mD^n)$ is not coherent. (It is worth mentioning that whether
the Hardy algebra $H^\infty(\mD^2)$ of the bidisk is coherent or not
seems to be an open problem.)  Using Amar's result, we will prove the
following result:

\begin{theorem}
\label{main_theorem}
 $\sH$ is not coherent.
\end{theorem}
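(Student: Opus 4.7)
Since $\iota: \sH \to H^\infty(\mD^\infty)$ is a ring isomorphism and coherence is preserved under ring isomorphism, it suffices to show that $H^\infty(\mD^\infty)$ is not coherent. The plan is to transfer Amar's non-coherence result for $H^\infty(\mD^n)$, $n \geq 3$, to the infinite-dimensional setting via a natural retraction. For each $n$, there is an inclusion $\iota_n : H^\infty(\mD^n) \hookrightarrow H^\infty(\mD^\infty)$ sending $f(z_1,\ldots,z_n)$ to the infinite-variable function that depends only on the first $n$ coordinates (every section of this function has norm at most $\|f\|_\infty$, so its image lies in $H^\infty(\mD^\infty)$), together with the $n$-th section map $\rho_n : H^\infty(\mD^\infty) \to H^\infty(\mD^n)$, given by $(\rho_n F)(z_1,\ldots,z_n) = F(z_1,\ldots,z_n,0,0,\ldots) = F_n(z_1,\ldots,z_n)$. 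Both maps are unital algebra homomorphisms, and $\rho_n\circ\iota_n = \mathrm{id}_{H^\infty(\mD^n)}$.

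With these tools available, fix $n \geq 3$ and, by Amar \cite{Ama}, a tuple $\mathbf{f} = (f_1,\ldots,f_k) \in (H^\infty(\mD^n))^k$ whose module of relations $\mathbf{f}^\perp$ is not finitely generated over $H^\infty(\mD^n)$. Put $\tilde{\mathbf{f}} := (\iota_n(f_1),\ldots,\iota_n(f_k)) \in (H^\infty(\mD^\infty))^k$, and suppose towards a contradiction that $\tilde{\mathbf{f}}^\perp$ is generated as an $H^\infty(\mD^\infty)$-module by finitely many relations $\mathbf{G}^{(1)},\ldots,\mathbf{G}^{(N)}$. Applying $\rho_n$ componentwise, and using that $\rho_n$ is an algebra homomorphism fixing each $f_i$, each $\rho_n(\mathbf{G}^{(j)}) \in (H^\infty(\mD^n))^k$ is a relation on $\mathbf{f}$. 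For an arbitrary $\mathbf{g} \in \mathbf{f}^\perp$, the pushforward $\iota_n(\mathbf{g})$ is a relation on $\tilde{\mathbf{f}}$ in $H^\infty(\mD^\infty)$, so there exist $h_1,\ldots,h_N \in H^\infty(\mD^\infty)$ with $\iota_n(\mathbf{g}) = \sum_{j=1}^N h_j\,\mathbf{G}^{(j)}$. Applying $\rho_n$ componentwise and using $\rho_n\circ\iota_n = \mathrm{id}$, we obtain $\mathbf{g} = \sum_{j=1}^N \rho_n(h_j)\,\rho_n(\mathbf{G}^{(j)})$ in $H^\infty(\mD^n)$, exhibiting $\mathbf{f}^\perp$ as finitely generated — a contradiction. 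Hence $\tilde{\mathbf{f}}^\perp$ is not finitely generated, and $H^\infty(\mD^\infty)$ is not coherent.

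The only step requiring verification beyond formal manipulation is that $\rho_n$ is a well-defined, norm-contractive, unital algebra homomorphism on $H^\infty(\mD^\infty)$; that is, $F_n \in H^\infty(\mD^n)$ with $\|F_n\|_\infty \leq \|F\|_\infty$, and $(FG)_n = F_n G_n$. Both facts are built into the definition of $H^\infty(\mD^\infty)$ recalled earlier: the section is a pointwise substitution of zeros, and the product is pointwise, so multiplicativity of sections is immediate. Beyond this piece of bookkeeping, the argument is a clean base-change between modules of syzygies, with no analytic input required beyond Amar's theorem itself.
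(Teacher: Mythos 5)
Your proof is correct and follows essentially the same route as the paper: the paper fixes Amar's tuple $(z_1-z_2,z_2-z_3)$ in $H^\infty(\mD^3)$, pulls relations back to $\sH$ via $\iota^{-1}$, expresses them in terms of the assumed finite generating set, and then applies the $3$rd-section map --- which is precisely your retraction $\rho_3$. Your only (harmless) differences are working abstractly in $H^\infty(\mD^\infty)$ with an arbitrary Amar tuple rather than with the concrete Dirichlet series $2^{-s}-3^{-s}$, $3^{-s}-5^{-s}$, and making the retraction formalism $\rho_n\circ\iota_n=\mathrm{id}$ explicit.
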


\subsection{Stable rank}

In algebraic $K$-theory, the notion of (Bass) stable rank of a ring
was introduced in order to facilitate $K$-theoretic computations 
\cite{Bas}.

\begin{definition}
Let $R$ be a commutative ring with an identity element (denoted by $1$). 

\noindent An element $(a_1,\cdots, a_n)\in R^n$ is called {\em unimodular} if
there exist elements $b_1,\cdots, b_n$ in $R$ such that
  $$
 b_1 a_1+\cdots +b_n a_n=1. 
 $$
The set of all unimodular elements of $R^n$ is denoted by $U_n (R)$. 

\noindent We say that $
 a=(a_1,\cdots, a_{n+1})\in U_{n+1}(R)
$ 
is {\em reducible} if there exists an element $(x_1,\cdots, x_n)\in R^n $ such
that
$$
(a_1+x_1 a_{n+1},\;\cdots, \; a_n+ x_n a_{n+1})\in U_n(R).
$$
The {\em Bass stable rank of } $R$ is the least integer $n\in \mN$ for
which every $a\in U_{n+1}(R)$ is reducible. If there is no such
integer $n$, we say that $R$ {\em has infinite stable rank}.
\end{definition}

\noindent Using the same idea as in \cite[Proposition~1]{Mor} (that
the infinite polydisk algebra $A(\mD^\infty)$ has infinite Bass stable
rank), we show the following.

\begin{theorem}
\label{Bass_stable_rank_of_A_DS}
 The Bass stable rank of $\sH$ is infinite.
\end{theorem}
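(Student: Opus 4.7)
By the Bohr isomorphism $\iota : \sH \to H^\infty(\mD^\infty)$ supplied by the Proposition above, the Bass stable rank is preserved under $\iota$, so it suffices to show that $H^\infty(\mD^\infty)$ has infinite Bass stable rank. Following the strategy of \cite[Proposition~1]{Mor}, for each $n\in\mN$ the plan is to exhibit an explicit unimodular $(n+1)$-tuple in $H^\infty(\mD^\infty)$ that fails to be reducible; this forces the Bass stable rank of $\sH$ to exceed every $n$, and hence be infinite.

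A natural candidate, built from the coordinate functions $z_j$ of $\mD^\infty$ (the images under $\iota^{-1}$ of $p_j^{-s}\in\sH$), is
\[
F_n := (z_1,\, z_2,\, \ldots,\, z_n,\, 1 - z_1 z_2 \cdots z_n) \in H^\infty(\mD^\infty)^{n+1}.
\]
Unimodularity of $F_n$ is immediate from the Bezout identity
\[
(z_2 z_3 \cdots z_n)\cdot z_1 \;+\; 1\cdot (1 - z_1 z_2 \cdots z_n) \;=\; 1.
\]

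To prove non-reducibility, I would argue by contradiction: suppose there exist $h_1,\ldots,h_n \in H^\infty(\mD^\infty)$ such that
\[
G_n := \bigl(z_j + h_j(1 - z_1 \cdots z_n)\bigr)_{j=1}^n
\]
is unimodular. First, reduce to a finite-variable problem via the unital algebra homomorphism $\varepsilon : H^\infty(\mD^\infty) \to H^\infty(\mD^n)$ obtained by evaluating $z_{n+1} = z_{n+2} = \cdots = 0$, which preserves unimodular tuples and reducibility; the task descends to showing that $F_n$ is not reducible in $H^\infty(\mD^n)$. The contradiction would then be extracted by passing to the distinguished boundary $\mT^n$: on the codimension-one subvariety $\{\zeta\in\mT^n: \zeta_1\zeta_2\cdots \zeta_n=1\}$ (topologically $\mT^{n-1}$) the last component $1 - z_1\cdots z_n$ vanishes and $G_n$ restricts to the coordinate embedding $\mT^{n-1} \hookrightarrow \mC^n\setminus\{0\}$, whose nontrivial homotopy/$K$-theoretic class is intended to provide the obstruction to unimodularity of $G_n$.

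The main obstacle is precisely this last step: rigorously translating the algebraic Bezout identity $\sum_j k_j(z_j + h_j(1-z_1\cdots z_n)) = 1$ in an $H^\infty$ algebra into a boundary-value statement on the Silov boundary $\mT^n$ and extracting the topological contradiction. This is more delicate in the $H^\infty$ setting than in the $A(\mD^\infty)$ setting originally treated in \cite{Mor} (where continuous boundary values are directly at hand), and requires careful use of the maximal ideal space of $H^\infty(\mD^\infty)$ and its fibres over $\overline{\mD^\infty}$ --- this being exactly the adaptation of \cite{Mor} that the authors signal in the introduction.
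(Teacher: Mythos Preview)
Your proposal has a genuine gap, and the paper's argument avoids it by a device you have not hit upon. You yourself identify the obstacle: with the candidate $F_n=(z_1,\ldots,z_n,1-z_1\cdots z_n)$, the intended obstruction lives on the distinguished boundary $\mT^n$, where $H^\infty$ functions need not extend continuously; so the homotopy/$K$-theory step is not available without a delicate maximal-ideal-space analysis that you do not carry out.

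The paper's adaptation of \cite{Mor} works quite differently and never touches the boundary. The crucial change is to build the last entry out of \emph{$2n$} variables rather than $n$: one takes
\[
g(s)=\prod_{j=1}^n\Bigl(1-\frac{1}{(p_j p_{n+j})^s}\Bigr),\qquad \iota(g)=\prod_{j=1}^n(1-z_j z_{n+j}),
\]
and sets $\mathbf{f}=(p_1^{-s},\ldots,p_n^{-s},g)$. Assuming reducibility, one has $h_1,\ldots,h_n,y_1,\ldots,y_n\in\sH$ with $\sum_j (z_j+\iota(g)\iota(h_j))\iota(y_j)=1$. Now evaluate at points of the special form $(z_1,\ldots,z_n,\overline{z_1},\ldots,\overline{z_n},0,\ldots)\in\mD^\infty$; the point is that $\iota(g)$ becomes the \emph{real} function $\prod_{j=1}^n(1-|z_j|^2)$, so everything is defined and continuous on the \emph{open} polydisk $\mD^n$. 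One then sets
\[
\mathbf{\Phi}(\mathbf{z})=-\mathbf{h}(z_1,\ldots,z_n,\overline{z_1},\ldots,\overline{z_n},0,\ldots)\prod_{j=1}^n(1-|z_j|^2)
\]
(extended by $0$ outside $\mD^n$), which is continuous on $\mC^n$ and bounded, hence maps some $r\overline{\mD}^n$ into itself. Brouwer's fixed point theorem yields $\mathbf{z}_*\in\mD^n$ with $\mathbf{\Phi}(\mathbf{z}_*)=\mathbf{z}_*$, i.e.\ $\zeta_j+\iota(g)\iota(h_j)=0$ at that point for every $j$, contradicting the Bezout identity.

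In short: your single-block product $1-z_1\cdots z_n$ forces you to the boundary; the paper's paired product $\prod(1-z_j z_{n+j})$, combined with the conjugate substitution $z_{n+j}=\overline{z_j}$, converts the problem into a fixed-point argument entirely inside $\mD^n$, where $H^\infty$ behaves well. That is the missing idea.
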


For Banach algebras, an analogue of the Bass stable rank, called 
the topological stable rank, was introduced by Marc Rieffel in \cite{Rie}.

\begin{definition}
  Let $R$ be a commutative complex Banach algebra with unit element
  $1$. The least integer $n$ for which $U_n(R)$ is dense in $R^n$ is
  called the {\em topological stable rank of } $R$. We say $R$ {\em
    has infinite topological stable rank} if no such integer $n$
  exists.
 \end{definition}
 
\begin{corollary}
  The topological stable rank of $\sH$ is infinite. 
\end{corollary}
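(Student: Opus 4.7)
The plan is to derive the corollary directly from Theorem~\ref{Bass_stable_rank_of_A_DS} by invoking Rieffel's classical comparison inequality
$$
\mathrm{bsr}(R) \le \mathrm{tsr}(R),
$$
valid for every unital commutative complex Banach algebra $R$ (see \cite{Rie}). Since Theorem~\ref{Bass_stable_rank_of_A_DS} asserts $\mathrm{bsr}(\sH) = \infty$, this immediately forces $\mathrm{tsr}(\sH) = \infty$ as well.

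To spell out how the inequality is used, one argues by contradiction. Suppose $\mathrm{tsr}(\sH) = n$ for some $n \in \mN$, so that $U_n(\sH)$ is dense in $\sH^n$. Given $(a_1,\ldots,a_{n+1}) \in U_{n+1}(\sH)$ with Bezout relation $\sum_{i=1}^{n+1} b_i a_i = 1$, one combines the density of $U_n(\sH)$ in $\sH^n$ with its \emph{openness} (automatic in the Banach algebra setting, because invertibility is an open condition) to produce $x_1,\ldots,x_n \in \sH$ such that $(a_1 + x_1 a_{n+1},\ldots,a_n + x_n a_{n+1}) \in U_n(\sH)$. This exhibits $(a_1,\ldots,a_{n+1})$ as reducible, so $\mathrm{bsr}(\sH) \le n$, contradicting Theorem~\ref{Bass_stable_rank_of_A_DS}.

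The only nontrivial point in this construction is the absorption step: one must realize a small perturbation of $(a_1,\ldots,a_n)$ inside the affine subspace $(a_1,\ldots,a_n) + a_{n+1}\sH^n$ of $\sH^n$. This is precisely where the Bezout coefficients $b_i$ are exploited, and it is the main technical content of Rieffel's argument in \cite{Rie}. Since this step is purely formal and uses only that $\sH$ is a unital commutative Banach algebra, no modification is needed for the algebra at hand, and we simply cite \cite{Rie}.
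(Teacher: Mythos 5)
Your proposal is correct and takes essentially the same route as the paper: both deduce the corollary from Theorem~\ref{Bass_stable_rank_of_A_DS} by citing Rieffel's inequality $\mathrm{bsr}(R)\le\mathrm{tsr}(R)$ for unital Banach algebras \cite{Rie}, the paper invoking it for commutative unital semisimple algebras (which $\sH$ is). Your added sketch of the absorption argument behind Rieffel's inequality is a harmless elaboration, not a different proof.
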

\begin{proof} This follows from the inequality that the Bass stable
  rank of a commutative unital semisimple complex Banach algebra is at
  most equal to its topological stable rank; see \cite[Corollary
  2.4]{Rie}.
\end{proof}

\begin{definition}
  The Krull dimension of a commutative ring $R$ is the supremum of the
  lengths of chains of distinct proper prime ideals of $R$.
\end{definition}

\begin{corollary}
  The Krull dimension of $\sH$ is infinite. 
\end{corollary}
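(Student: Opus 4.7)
The plan is to deduce the corollary immediately from Theorem~\ref{Bass_stable_rank_of_A_DS} by invoking the classical inequality
$$
\mathrm{bsr}(R) \leq \dim(R)+1,
$$
valid for every commutative unital ring $R$; here $\mathrm{bsr}(R)$ denotes the Bass stable rank and $\dim(R)$ the Krull dimension. This bound was proved by Bass in \cite{Bas} for Noetherian rings and later extended by Heitmann to arbitrary commutative rings. Read contrapositively, it says precisely that infinite Bass stable rank forces infinite Krull dimension.

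Concretely, Theorem~\ref{Bass_stable_rank_of_A_DS} asserts $\mathrm{bsr}(\sH) = \infty$. If $\dim(\sH)$ were finite and equal to some $d\in\mN$, the displayed inequality would yield the finite bound $\mathrm{bsr}(\sH)\leq d+1$, a contradiction. Hence $\dim(\sH)=\infty$.

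I do not anticipate any real technical obstacle; the only subtlety worth noting is that $\sH$ is manifestly non-Noetherian (pulling back via $\iota$, the ideals generated in $H^\infty(\mD^\infty)$ by $z_1$, then by $(z_1,z_2)$, and so on, form a strictly ascending chain), so it is the general (Heitmann) form of the Bass inequality that must be invoked rather than Bass's original Noetherian version. An alternative route that avoids this external ingredient would be to construct explicitly an infinite strictly ascending chain of prime ideals in $\sH$, for example by transporting via $\iota$ a chain built from kernels of evaluations of $H^\infty(\mD^\infty)$ at nested coordinate sections; however the Bass--Heitmann inequality yields a one-line argument, and that is the route I would follow.
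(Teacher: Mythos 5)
Your proof is correct and follows essentially the same route as the paper: deduce infinite Krull dimension from Theorem~\ref{Bass_stable_rank_of_A_DS} via a bound on the Bass stable rank in terms of the Krull dimension, citing Heitmann for the non-Noetherian case. The only (immaterial) difference is the constant: the paper quotes the bound $\mathrm{bsr}(R)\leq d+2$ from \cite{Hei}, while you use $d+1$; either version yields the contradiction with infinite Bass stable rank.
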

\begin{proof} This follows from the fact that if a ring has Krull
  dimension $d$, then its Bass stable rank is at most $d+2$; see
  \cite{Hei}.
\end{proof}
 
\section{Noncoherence of $\sH$}

We will use the following fact due to Amar \cite[Proof of
Theorem~1.(ii)]{Ama}.

\begin{proposition}
\label{prop_Amar}
$(z_1-z_2,z_2-z_3)^\perp$ is not a finitely generated
$H^\infty(\mD^3)$-module.
\end{proposition}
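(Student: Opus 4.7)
The plan is to convert the problem into a statement about a single module of holomorphic functions on $\mD^3$. Given a relation $(g_1,g_2)\in H^\infty(\mD^3)^2$ on $(z_1-z_2, z_2-z_3)$, the identity $g_1(z_1-z_2) = -g_2(z_2-z_3)$ allows one to define a holomorphic function $h$ on $\mD^3\setminus\{z_1=z_2=z_3\}$ by setting $h = g_1/(z_2-z_3)$ wherever $z_2\neq z_3$ and $h = -g_2/(z_1-z_2)$ wherever $z_1\neq z_2$; these two definitions agree on the overlap by the relation itself. Since the excluded diagonal has complex codimension $2$ in $\mD^3$, Hartogs' Kugelsatz extends $h$ holomorphically to all of $\mD^3$, and one then recovers $g_1 = h(z_2-z_3)$ and $g_2 = -h(z_1-z_2)$ globally. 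Hence as $H^\infty(\mD^3)$-modules,
\[
(z_1-z_2,z_2-z_3)^\perp \,\cong\, H := \bigl\{h\in\calO(\mD^3) : (z_1-z_2)h,\,(z_2-z_3)h\in H^\infty(\mD^3)\bigr\},
\]
and the task becomes showing that $H$ is not finitely generated.

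\textbf{Step 2 (Open mapping set-up).} Norm $H$ by $\|h\|_H := \max(\|(z_1-z_2)h\|_\infty, \|(z_2-z_3)h\|_\infty)$; this makes $H$ a Banach $H^\infty(\mD^3)$-module. If $H$ were generated by $\phi_1,\ldots,\phi_N$, then the surjection $H^\infty(\mD^3)^N\to H$, $(a_1,\ldots,a_N)\mapsto \sum_i a_i\phi_i$, would be open by Banach's theorem, so there would exist a constant $C>0$ such that every $h\in H$ admits a representation $h=\sum_i a_i\phi_i$ with $a_i\in H^\infty(\mD^3)$ and $\|a_i\|_\infty \leq C\|h\|_H$. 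The strategy is to contradict this uniform bound by exhibiting a suitable test family.

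\textbf{Step 3 (Test family and contradiction).} Following Amar, I would construct a sequence $h_k\in H$ adapted to a sequence of diagonal points $p_k=(r_k,r_k,r_k)$ with $r_k\nearrow 1$, so that $\|h_k\|_H$ stays uniformly bounded while the values $h_k(p_k)$, or suitable jets transverse to the diagonal $\{z_1=z_2=z_3\}$ at $p_k$, grow in a direction that cannot be reproduced by any fixed finite list of generators. A concrete model is
\[
h_k(z) = \frac{\psi_k(z)}{(3-z_1-z_2-z_3)^{\alpha_k}},
\]
with exponents $\alpha_k\to\infty$ and holomorphic factors $\psi_k\in H^\infty(\mD^3)$ engineered so that the singularity at $(1,1,1)$ is killed after multiplication by either $z_1-z_2$ or $z_2-z_3$ but not by lower-order terms; a linear-algebraic pigeonhole argument on the values and low-order jets of $\phi_1,\ldots,\phi_N$ at the points $p_k$ then contradicts the bound $\|a_i\|_\infty \leq C\|h_k\|_H$ from Step 2 once $k$ is large enough. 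The principal obstacle is precisely this step: producing an explicit, genuinely rich family in $H$ whose behavior near the one-dimensional singular diagonal—the unique locus where $H^\infty$-boundedness and pointwise divisibility by $z_1-z_2$ and $z_2-z_3$ can conflict—outpaces every fixed finite generating set. All the function-theoretic subtlety of Amar's original argument concentrates here, and this is the part I would follow most closely in his paper.
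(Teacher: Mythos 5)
Your Steps 1 and 2 are fine: the identification of $(z_1-z_2,z_2-z_3)^\perp$ with the module $H$ of holomorphic $h$ such that $(z_1-z_2)h$ and $(z_2-z_3)h$ are bounded (via extension across the codimension-two diagonal --- the relevant tool is Riemann's second removable singularity theorem for analytic sets of codimension $\geq 2$, not really the Kugelsatz, but the fact is correct), and the open-mapping reduction to a uniform bound $\|a_i\|_\infty\leq C\|h\|_H$, are both standard and consistent with how Amar frames the problem. Note that the paper itself does not prove this proposition at all: it is quoted verbatim from Amar's paper (Proof of Theorem~1(ii) in \cite{Ama}), so the only thing to compare your attempt against is Amar's original argument.

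The genuine gap is Step 3, which is the entire content of the theorem and which you explicitly leave to ``follow in his paper.'' Nothing is constructed: the exponents $\alpha_k$, the factors $\psi_k$, the verification that $h_k\in H$ with $\|h_k\|_H$ uniformly bounded, and the argument that no fixed $\phi_1,\dots,\phi_N$ can reproduce the $h_k$ with controlled coefficients are all asserted, not proved. The proposed model is not even viable as stated in its simplest case: already $h=1/(3-z_1-z_2-z_3)$ fails to lie in $H$, since along tangential approaches to $(1,1,1)$ (e.g.\ $z_1=1-\delta-i\sqrt{\delta}$, $z_2=1-\delta+i\sqrt{\delta}$, $z_3=1-\delta$) one has $|z_1-z_2|\asymp\sqrt{\delta}$ while $|3-z_1-z_2-z_3|\asymp\delta$, so $(z_1-z_2)h$ is unbounded; thus all the work is hidden in ``engineering'' $\psi_k$, and with $\alpha_k\to\infty$ the required uniform bound on $\|h_k\|_H$ is a delicate estimate for which you give nothing. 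Likewise the ``linear-algebraic pigeonhole on low-order jets'' is not an argument as it stands: a finite generating set imposes no evident finite-dimensional constraint on jets at interior points drifting to the boundary unless one has precisely the quantitative growth estimates near the singular point that constitute Amar's actual proof. As a blind proof of the proposition, the proposal is therefore incomplete at its essential step.
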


\begin{proof}[Proof of Theorem~\ref{main_theorem}] 
The main idea of the proof is that, using the isomorphism $\iota$, 
essentially we boil the problem down to working with
$H^\infty(\mD^\infty)$.  Let
\begin{eqnarray*}
  f_1&:=&\frac{1}{2^s}-\frac{1}{3^s},\\
  f_2&:=& \frac{1}{3^s}-\frac{1}{5^s}.
\end{eqnarray*}
Then $\iota(f_1)=z_1-z_2$ and $\iota(f_2)=z_2-z_3$.  Suppose that
$(f_1,f_2)^\perp$ is a finitely generated $\sH$-module, say by
$$
\left[ \begin{array}{cc} g_1^{(1)}\\g_1^{(2)}\end{array}\right], 
 \cdots ,
\left[ \begin{array}{cc} g_r^{(1)}\\g_r^{(2)}\end{array}\right]\in (\sH)^2.
$$
We will show that the $3$rd section of the image under $\iota$ of the
above elements generate $(z_1-z_2,z_2-z_3)^\perp$ in
$H^\infty(\mD^3)$, contradicting Proposition~\ref{prop_Amar}.  If
$$
\left[\begin{array}{cc} G^{(1)} \\ G^{(2)}\end{array}\right] 
\in (H^\infty(\mD^3))^2 \cap (F_1,F_2)^\perp,
$$
then $F_1 G^{(1)} +F_2G^{(2)}=0$, and by applying $\iota^{-1}$, we see
that
$$
 \left[\begin{array}{cc} \iota^{-1}G^{(1)} \\ \iota^{-1} G^{(2)}\end{array}\right] 
 \in  (f_1,f_2)^\perp.
$$
So there exist $\alpha^{(1)},\cdots,\alpha^{(r)} \in \sH$ such that
$$
\left[\begin{array}{cc} \iota^{-1}G^{(1)} \\ \iota^{-1} G^{(2)}\end{array}\right] 
=
\alpha^{(1)} \left[ \begin{array}{cc} g_1^{(1)}\\g_1^{(2)}\end{array}\right]
+ \cdots +
\alpha^{(r)} \left[ \begin{array}{cc} g_r^{(1)}\\g_r^{(2)}\end{array}\right].
$$
Applying $\iota$, we obtain
$$
\left[\begin{array}{cc} G^{(1)} \\ G^{(2)}\end{array}\right] 
=
\iota(\alpha^{(1)}) 
\left[ \begin{array}{cc} \iota(g_1^{(1)}) \\ \iota(g_1^{(2)})\end{array}\right]
+ \cdots +
\iota(\alpha^{(r)}) 
\left[ \begin{array}{cc} \iota(g_r^{(1)}) \\ \iota(g_r^{(2)}) \end{array}\right].
$$
Finally taking the $3$rd section, we obtain
$$
\left[\begin{array}{cc} G^{(1)}(z_1,z_2,z_3) \\ G^{(2)}(z_1,z_2,z_3)\end{array}\right] 
=
\sum_{j=1}^r (\iota(\alpha^{(j)}))(\mathbf{z}^{(3)}) 
\left[ \begin{array}{cc} (\iota(g_j^{(1)}))(\mathbf{z}^{(3)}) \\ 
(\iota(g_j^{(2)}))(\mathbf{z}^{(3)})\end{array}\right].
$$
So it follows that
$$
\left[ \begin{array}{cc} (\iota(g_1^{(1)}))(\mathbf{z}^{(3)}) \\ 
(\iota(g_1^{(2)}))(\mathbf{z}^{(3)})\end{array}\right],
\cdots ,
\left[ \begin{array}{cc} (\iota(g_r^{(1)}))(\mathbf{z}^{(3)}) \\ 
(\iota(g_r^{(2)}))(\mathbf{z}^{(3)}) \end{array}\right]
$$
generate $(z_1-z_2,z_2-z_3)^\perp$, a contradiction to Amar's result,
Proposition~\ref{prop_Amar}.
\end{proof}

\section{Stable rank of $\sH$}

The proof of Theorem~\ref{Bass_stable_rank_of_A_DS} is a
straightforward adaptation of the first author's proof of the fact that
the Bass stable rank of the infinite polydisk algebra is infinite
\cite[Proposition~1]{Mor}.  In \cite{Mor}, the infinite polydisk algebra
$A(\mD^\infty)$ is the uniform closure of the algebra generated by the
coordinate functions $z_1,z_2,z_3,\cdots$ on the countably infinite
polydisk $\overline{\mD}\times \overline{\mD}\times
\overline{\mD}\times \cdots$.

\begin{proof}[Proof of Theorem~\ref{Bass_stable_rank_of_A_DS}:] 
Fix $n\in \mN$. Let $g\in \sH$ be given by 
\begin{equation}
\label{def_g}
 g(s):= \prod_{j=1}^n \Big(1-\frac{1}{(p_j p_{n+j})^s} \Big) \in \sH.
\end{equation}
Set 
$$
\mathbf{f}:=\Big(\frac{1}{2^s},\cdots,\frac{1}{p_n^s}, g\Big) \in (\sH)^{n+1}.
$$
We will show that $\mathbf{f}\in U_{n+1} (\sH)$ is not
reducible. First let us note that $\mathbf{f}$ is unimodular. Indeed,
by expanding the product on the right hand side of \eqref{def_g}, we
obtain
$$
g=1+\frac{1}{2^s}\cdot  g_1+\cdots +\frac{1}{p_n^s}\cdot g_n,
$$
for some appropriate $g_1,\cdots, g_n\in \sH$. Now suppose that
$\mathbf{f}$ is reducible, and that there exist $h_1,\cdots,h_n\in
\sH$ such that
$$
\Big(\frac{1}{2^s}+gh_1,\cdots,\frac{1}{p_n^s}+gh_n\Big)\in U_n(\sH).
$$
Let $y_1,\cdots , y_n\in \sH$ be such that 
$$
\Big( \frac{1}{2^s}+gh_1\Big)y_1+\cdots+\Big(\frac{1}{p_n^s}+gh_n\Big)y_n=1.
$$
Applying $\iota$, we obtain
\begin{equation}
\label{eq_Bsr_1}
(z_1+\iota(g) \iota(h_1))\iota(y_1)+\cdots+ (z_n+\iota(g) \iota(h_n))\iota(y_n)=1 .
\end{equation}
Let $\mathbf{h}:=(\iota(h_1),\cdots, \iota(h_n))$. For
$\mathbf{z}=(z_1,\cdots, z_n)\in \mC^n$, we define
$$
\mathbf{\Phi}(\mathbf{z})=\left\{ \begin{array}{l} 
 -\mathbf{h}(z_1,\cdots, z_n,\overline{z_1},\cdots, \overline{z_n},0,\cdots) 
\displaystyle\prod_{j=1}^n (1-|z_j|^2)  
\\
\phantom{h(z_1,\cdots, z_n,\overline{z_1},\cdots, \overline{z_n},0,\cdots)}
\textrm{ for }\; |z_j|< 1, \;j=1,\cdots,n,\\
0 \; \textrm{ otherwise}.\phantom{\displaystyle\prod_{j=1}^n}
\end{array}\right.
$$
Then $\mathbf{\Phi}$ is a continuous map from $\mC^n$ into $\mC^n$.
But $\mathbf{\Phi}$ vanishes outside $ \mD^n$, and so
$$
 \max_{\mathbf{z}\in \mD^n }\|\mathbf{\Phi}(\mathbf{z})\|_2
= \sup_{\mathbf{z}\in \mC^n} \|\mathbf{\Phi}(\mathbf{z})\|_2.
$$
This implies that there must exist an $r\geq 1$ such that $\mathbf{\Phi}$
maps $K:=r \overline{\mD}^n$ into $K$. As $K$ is compact and convex,
by Brouwer's Fixed Point Theorem it follows that there exists a
$\mathbf{z}_*\in K$ such that
$$
\mathbf{\Phi}(\mathbf{z}_*)=\mathbf{z}_*.
$$
Since $\mathbf{\Phi}$ is zero outside $\mD^n$, we see that
$\mathbf{z}_*\in \mD^n$.  Let $\mathbf{z}_*=(\zeta_1,\cdots,
\zeta_n)$. Then for each $j\in \{1,\cdots, n\}$, we obtain
\begin{eqnarray}
\nonumber
0&=&\zeta_j + (\iota(h_j))(\zeta_1,\cdots, \zeta_n, 
\overline{\zeta_1},\cdots, \overline{\zeta_n},0,\cdots) 
\prod_{k=1}^n (1-|\zeta_k|^2)
\\
\label{eq_Bsr_2}  
&=& \zeta_j + (\iota(h_j) \iota(g))(\zeta_1,\cdots, \zeta_n, 
\overline{\zeta_1},\cdots, \overline{\zeta_n},0,\cdots).
\end{eqnarray}
But from \eqref{eq_Bsr_1}, we know that  
$$
\sum_{j=1}^n (z_j + \iota(h_j) \iota(g)) \iota(y_j)= 1 ,
$$
and this contradicts \eqref{eq_Bsr_2}.  As the choice of $n\in \mN$
was arbitrary, it follows that the Bass stable rank of $\sH$ is
infinite.
\end{proof}

\noindent {\bf Acknowledgement}: Useful discussions with Anders Olofsson (Lund
University) are gratefully acknowledged by the second author.

\end{document}